\documentclass[12pt,a4paper,reqno]{amsart}
\usepackage{amssymb,amsmath}
\usepackage{amsfonts,amsbsy,bm}
\usepackage{latexsym}
\usepackage{exscale}
\usepackage{mathrsfs}
\usepackage{graphicx}
\makeatletter
\newcommand*\bigcdot{\mathpalette\bigcdot@{.5}}
\newcommand*\bigcdot@[2]{\mathbin{\vcenter{\hbox{\scalebox{#2}{$\m@th#1\bullet$}}}}}
\makeatother
\usepackage{color}


%
%
%
%

\headheight=8pt \topmargin=0pt \textheight=660pt \textwidth=436pt
\oddsidemargin=10pt \evensidemargin=10pt
\newcommand{\R}{\mathbb R}
\newcommand{\Z}{\mathbb Z}

\renewcommand{\l}{\lambda}

\newcommand{\B}{\mathcal B}

\newcommand{\w}{\omega}
\newcommand{\p}{\varphi}


\newcommand {\SR} {{\mathbb R}}


\newcommand {\ga} {{\gamma}}




\numberwithin{equation}{section}
\newtheorem{theorem}{Theorem}[section]

\newtheorem{Remark}[theorem]{Remark}
\newtheorem{proposition}[theorem]{Proposition}

\newtheorem{example}[theorem]{Example}


\newcommand{\Ba}[1]{\begin{array}{#1}}
\newcommand{\Ea}{\end{array}}
\newcommand{\Be}{\begin{equation}}
\newcommand{\Ee}{\end{equation}}
\newcommand{\Bea}{\begin{eqnarray}}
\newcommand{\Eea}{\end{eqnarray}}
\newcommand{\Beas}{\begin{eqnarray*}}
\newcommand{\Eeas}{\end{eqnarray*}}
\newcommand{\Benu}{\begin{enumerate}}
\newcommand{\Eenu}{\end{enumerate}}
\newcommand{\Bi}{\begin{itemize}}
\newcommand{\Ei}{\end{itemize}}

\newcommand{\BR}{\begin{Remark} \em}
\newcommand{\ER}{\end{Remark}}
\newcommand{\BE}{\begin{example} \em}
\newcommand{\EE}{\end{example}}

\newcommand {\Ds} {\displaystyle}

\newcounter{reg}
\setcounter{reg}{0}

\newcounter{regTO}
\setcounter{regTO}{0}


\begin{document}

\title[Frames of exponentials and sub--multitiles]{Frames of exponentials and sub--multitiles in LCA groups.}

\author{D. Barbieri}

\address{Davide Barbieri \\
Departamento de Matem\'aticas\\
Universidad Aut\'onoma de Madrid\\
28049, Madrid, Spain} 

\email{davide.barbieri@uam.es}

\author{C. Cabrelli}

\address{Carlos Cabrelli\\
	Departamento de Matem\'atica, FCEyN, Universidad de Buenos Aires and IMAS-UBA-CONICET\\ Argentina}
	 \email{cabrelli@dm.uba.ar}

\author{E. Hern\'andez}

\address{Eugenio Hern\'andez \\
Departamento de Matem\'aticas\\
Universidad Aut\'onoma de Madrid\\
28049, Madrid, Spain} \email{eugenio.hernandez@uam.es}

\author{P. Luthy}

\address{Peter Luthy\\
College of Mount Saint Vincent\\
Bronx, NY, USA} \email{peter.luthy@mountsaintvincent.edu}

\author{U. Molter}

\address{Ursula Molter\\
Departamento de Matem\'atica, FCEyN, Universidad de Buenos Aires and IMAS-UBA-CONICET\\ Argentina} 
\email{umolter@dm.uba.ar}

\author{C. Mosquera}

\address{Carolina Mosquera\\
	Departamento de Matem\'atica, FCEyN, Universidad de Buenos Aires and IMAS-UBA-CONICET\\ Argentina} 
\email{mosquera@dm.uba.ar}

\begin{abstract}
In this note we investigate the existence of frames of exponentials for $L^2(\Omega)$
in the setting of  LCA groups. Our main result shows that sub--multitiling properties   of $\Omega \subset \widehat {G}$
 with respect to a uniform lattice $\Gamma$ of $\widehat {G}$ guarantee the existence of a frame of exponentials
 with frequencies in a finite number of translates of the annihilator of $\Gamma$. We also prove the converse of this result and provide  conditions for the existence of these frames. These conditions extend  recent results on Riesz bases of exponentials and multitilings to frames.
\end{abstract}

\date{\today}
\subjclass[2010]{42C15, 06D22}

\keywords{Frames of exponentials, submultitiles, LCA groups. }

\maketitle

\section{Introduction and main result}\label{secIntroduc}

We begin by stating several known results.
\begin{itemize}
\item Let $\Omega$ be a measurable subset of $\SR^d$ with positive, finite measure, let $\Lambda$ be a complete lattice of $\SR^d$ (i.e. $\Lambda= A\Z^d$ for some $d\times d$ invertible matrix $A$ with real entries), and denote by $\Gamma$ the annihilator of $\Lambda.$ Recall that $\Gamma = \{\gamma \in \SR^d: e^{2\pi i\langle \lambda , \gamma \rangle}=1, \forall \ \lambda\in \Lambda \}.$ In 1974, B. Fuglede (\cite{Fue74}, Section 6) proved that $\{e^{2\pi i \langle \lambda , {\,\bigcdot\,} \rangle}: \lambda \in \Lambda\}$ is an orthogonal basis for $L^2(\Omega)$ if and only if $(\Omega, \Gamma)$ is a \textbf{tiling pair} for $\SR^d$, that is $\Ds \sum_{\gamma \in \Gamma} \chi_\Omega (x + \gamma)= 1$, a. e. $x \in \SR^d.$

\

\item The result of B. Fuglede just stated also holds in the setting of locally compact abelian (LCA)  groups. Let $G$ be a second countable LCA group, and let $\Lambda$ be a uniform lattice in $G$ (i.e. $\Lambda$ is a  discrete and co-compact subgroup of $G$). Denote by $\widehat G$ the dual group of $G.$ 
For a character $\omega \in \widehat G$ we use the notation $e_{g} (\omega) = \omega(g),$ for $g\in G.$
Let $\Gamma$ be  the annihilator  of $\Lambda.$ (i.e. $\Gamma = \{\gamma \in \widehat G: e_{\lambda} (\gamma)=1 \text { for all } \lambda \in \Lambda\}$).
The dual group $\widehat G$ of $G$ is also a second countable LCA group, and $\Gamma$ is also a uniform lattice.  Let $\Omega$ be a measurable subset of $\widehat G$ with positive and finite measure. In 1987, S. Pedersen (\cite{Pedersen}, Theorem 3.6) proved that $\{e_\lambda : \lambda \in \Lambda \}$ is an orthogonal basis for $L^2(\Omega)$ if and only if $(\Omega, \Gamma)$ is a tiling pair for $\widehat G$, that is $ \sum_{\gamma \in \Gamma} \chi_\Omega (\omega + \gamma)= 1$, a. e. $\omega \in \widehat G.$

\


\item Recent results in this area focused on \textit{multitiling pairs}. Let $\Omega$ be a bounded, measurable subset of $\R^d$, and let $\Gamma$ be a lattice of $\SR^d$. If there exists a positive integer $\ell$ such that
\[
\sum_{\gamma \in \Gamma} \chi_\Omega (x + \gamma) = \ell\,, \quad \mbox{a.e}\ x\in \SR^d\,,
\]
we will say that $(\Omega , \Gamma)$ is a \textbf{multitiling pair}, or an {\bf $\ell$-tiling pair} for $\SR^d$.
For a lattice $\Lambda \subset \SR^d$ and $a_1, \dots, a_\ell \in \SR^d$, let
\[
E_\Lambda (a_1, \dots , a_\ell) := \{e^{2\pi i\langle a_j+\lambda\,,\, \bigcdot \,\rangle}: j=1, \dots, \ell; \lambda \in \Lambda \}\,.
\]

S. Gresptad and N. Lev (\cite{GL14}, Theorem 1) proved in 2014 that if $\Gamma$ is the annihilator of $\Lambda$, $\Omega$ is a bounded, measurable subset of $\SR^d$ whose boundary has measure zero, and $(\Omega, \Gamma)$ is an $\ell$-tiling pair for $\SR^d$, there exist $a_1, \dots, a_\ell \in \SR^d$ such that $E_\Lambda (a_1, \dots , a_\ell)$ is a \textit{Riesz basis} for $L^2(\Omega).$ The proof of this result in \cite{GL14} uses Meyer's quasicrystals. In 2015  M. Kolountzakis (\cite{Kol15}, Theorem 1) found a simpler and shorter proof without the assumption on the boundary of $\Omega$.
\

For the reader's convenience we recall that a countable collection of elements $\Phi = \{\phi_j: j \in J\}$ of a Hilbert space $\mathbb H$ is a \textbf{Riesz basis} for $\mathbb H$ if it is the image of an orthonormal basis of $\mathbb H$ under a bounded, invertible operator $T \in \mathcal L(\mathbb H).$ Riesz bases provide stable representations of elements of $\mathbb H$.

\

\item This result has been extended to second countable LCA groups by E. Agora, J. Antezana, and C. Cabrelli (\cite{AAC15}, Theorem 4.1). Moreover, they prove the converse (\cite{AAC15}, Theorem 4.4): with the same notation as in the second item of this section, given a relatively compact subset $\Omega$ of $\widehat G$, if $L^2(\Omega)$ admits a Riesz basis of the form
$$
E_\Lambda (a_1, \dots , a_\ell) := \{e_{a_j+\lambda}: j=1,2, \dots, \ell; \lambda \in \Lambda\}
$$
for some $a_1, \dots, a_\ell \in G,$ then $(\Omega, \Gamma)$ is an $\ell$-tiling pair for $\widehat G.$

\end{itemize}

The purpose of this note is to investigate the situation when $(\Omega, \Gamma)$ is a {\it sub-multitiling pair} for $\widehat G.$ Let $\Omega$ be a measurable set in $\widehat G$ with positive and finite Haar measure. For $\Gamma$ a lattice in $\widehat G$ and $\omega \in \widehat G$ define
$$
F_{\Omega, \Gamma}(\omega):= \sum_{\gamma\in \Gamma} \chi_\Omega (\omega + \gamma)\,.
$$ 
If there exists a positive integer $\ell$ such that 
\Be \label{supremum}
\mbox{ess\,sup}_{\omega\in \widehat G} F_{\Omega, \Gamma}(\omega)= \ell\,,
\Ee
we will say that $(\Omega, \Gamma)$ is a {\bf sub-multitiling pair} or an $\ell$-{\bf subtiling pair}.

Denote by $Q_\Gamma$ a fundamental domain of the lattice $\Gamma$ in $\widehat G,$ i.e. it is a Borel measurable section of the quotient group $\widehat G/\Gamma.$ (Its existence is guaranteed by Theorem 1 in \cite{FG68}). Since $F_{\Omega, \Gamma}(\omega)$ is a $\Gamma$-periodic function, it is enough to compute the ess\,sup in (\ref{supremum}) over a fundamental domain $Q_\Gamma$. Observe that $(\Omega, \Gamma)$ is an $\ell$-tiling pair for $\widehat G$ if $F_{\Omega, \Gamma}(\omega)= \ell$ for a. e. $\omega \in Q_\Gamma.$

\

Another structure that allows for stable representations, besides orthonormal and Riesz bases, is that of a \textit{frame}. A collection of elements $\Phi = \{\phi_j: j \in J\}$ of a Hilbert space $\mathbb H$ is a {\bf frame} for $\mathbb H$ if it is the image of an orthonormal basis of $\mathbb H$ under a bounded, surjective operator $T \in \mathcal L(\mathbb H)$ or, equivalently, if there exist $0 < A \leq B < \infty$ such that
$$
A\|f\|^2 \leq \sum_{j\in J} |\langle f , \phi_j\rangle|^2 \leq B \|f\|^2, \quad \mbox{for all}\, f\in \mathbb H.
$$
(See \cite{Young}, Chapter 4, Section 7.) The numbers $A$ and $B$ are called \textbf{frame bounds} of $\Phi.$

\

In this note we prove the following relationship between frames of exponentials in LCA groups and $\ell$-subtiling pairs.

\begin{theorem} \label{Theorem1}
	Let G be a second countable LCA group and let $\Lambda$ be a uniform lattice of $G$.  Let $\widehat G$ be the dual group of $G$, and let $\Gamma$ be the annihilator of $\Lambda.$ Let $\Omega \subset \widehat G$ be a measurable set of positive, finite measure, and let $\ell$ be a positive integer.
	\begin{enumerate}
		\item If for some $a_1, \dots, a_\ell \in G$ the collection $E_\Lambda (a_1, \dots, a_\ell)$ is a frame of $L^2(\Omega),$ then $(\Omega, \Gamma)$ must be an $m$-subtiling pair of $\widehat G$ for some positive integer $m\leq \ell.$	
		
		\item If  $\Omega\subseteq\widehat G$ is a measurable, {\bf bounded} set and $(\Omega, \Gamma)$ is an  $\ell$-subtiling pair of $\widehat G$,
		then there
		exist $a_1,\dots, a_{\ell}\in G$ such that $E_\Lambda(a_1,\dots,a_\ell)$ is a frame of $L^2(\Omega).$	
	\end{enumerate}
\end{theorem}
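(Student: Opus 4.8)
The plan is to reduce everything to the Fourier/Zak-transform picture over the quotient $\widehat G/\Gamma$, where frames of exponentials become a pointwise spanning/Bessel condition for a family of vectors in a finite-dimensional (or $\ell^2(\Lambda)$, but with only finitely many active coordinates) space. Concretely, identify $\widehat{G}$ with $G^\wedge$, write $\widehat\Gamma$ for the annihilator of $\Gamma$ in $G$ (so $\widehat\Gamma \cong (\widehat G/\Gamma)^\wedge$ and $G/\widehat\Gamma \cong \widehat\Gamma^\wedge \cong \widehat\Gamma$-dual, a compact group), and note that $\Lambda \subset \widehat\Gamma$ with $\widehat\Gamma/\Lambda$ compact. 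For $f \in L^2(\Omega)$ one forms, for a.e.\ $\omega$ in a fundamental domain $Q_\Gamma$, the vector $\widehat f(\omega) := (f(\omega+\gamma))_{\gamma:\,\omega+\gamma\in\Omega}$; the key identity is the Parseval/Plancherel expansion
\[
\sum_{j=1}^{\ell}\sum_{\lambda\in\Lambda}\bigl|\langle f, e_{a_j+\lambda}\rangle_{L^2(\Omega)}\bigr|^2
= \int_{Q_\Gamma} \sum_{j=1}^{\ell}\Bigl|\,\widehat{f}(\omega)\cdot \overline{v_j(\omega)}\,\Bigr|^2\,d\omega,
\]
where $v_j(\omega)$ is the vector with entries $\overline{e_{a_j}(\omega+\gamma)} = \overline{(a_j,\omega+\gamma)}$ indexed by the $\gamma\in\Gamma$ with $\omega+\gamma\in\Omega$, after using that $\{e_\lambda\}_{\lambda\in\Lambda}$ restricted to a fundamental domain of $\Lambda$ behaves like an orthonormal-type system (this is where one invokes the LCA Plancherel theorem and the structure of $\Lambda\subset\widehat\Gamma$, exactly as in Pedersen's argument). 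So $E_\Lambda(a_1,\dots,a_\ell)$ is a frame for $L^2(\Omega)$ if and only if, for a.e.\ $\omega\in Q_\Gamma$, the finite family $\{v_1(\omega),\dots,v_\ell(\omega)\}$ spans $\SC^{F_{\Omega,\Gamma}(\omega)}$ with Bessel/lower frame constants bounded uniformly in $\omega$.

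For part (1): from the frame hypothesis and the identity above, the lower frame bound forces, for a.e.\ $\omega$, that $\{v_1(\omega),\dots,v_\ell(\omega)\}$ spans the space of dimension $F_{\Omega,\Gamma}(\omega)$; since $\ell$ vectors cannot span a space of dimension $>\ell$, we get $F_{\Omega,\Gamma}(\omega)\le \ell$ a.e., so $m:=\operatorname{ess\,sup} F_{\Omega,\Gamma}\le \ell$. That $F_{\Omega,\Gamma}\ge 1$ on a set of full measure in $Q_\Gamma$ (hence $m\ge 1$) is automatic because $|\Omega|>0$; so $(\Omega,\Gamma)$ is an $m$-subtiling pair with $1\le m\le\ell$. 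This direction is essentially a counting argument once the Zak-transform identity is in place.

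For part (2), the substantive direction: assume $\Omega$ bounded and $\operatorname{ess\,sup}F_{\Omega,\Gamma}=\ell$. Because $\Omega$ is bounded (relatively compact), only finitely many translates $\gamma_1,\dots,\gamma_N\in\Gamma$ meet $\Omega$, so for every $\omega$ the relevant vectors $v_j(\omega)$ live inside a fixed $\SC^N$ (with support the coordinate set $S(\omega):=\{k: \omega+\gamma_k\in\Omega\}$, $|S(\omega)|=F_{\Omega,\Gamma}(\omega)\le\ell$). We must choose $a_1,\dots,a_\ell\in G$ so that for a.e.\ $\omega$ the matrix with columns $v_j(\omega)$, restricted to the rows in $S(\omega)$, has full rank $|S(\omega)|$, with a uniform lower bound on its smallest singular value. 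The natural approach, following Kolountzakis and Agora–Antezana–Cabrelli: the entries $(a_j,\omega+\gamma_k) = (a_j,\gamma_k)(a_j,\omega)$ are, up to the $\omega$-independent twist $(a_j,\gamma_k)$, governed by the $N$ characters $\gamma_1,\dots,\gamma_N$ evaluated at the $a_j$'s; choosing the $a_j$ so that the points $((a_j,\gamma_1),\dots,(a_j,\gamma_N))\in\ST^N$ are in "general position" makes every $F_{\Omega,\Gamma}(\omega)\times\ell$ submatrix nondegenerate — one shows the bad set of $(a_1,\dots,a_\ell)$ has measure zero via a Vandermonde/analyticity argument in the group (using that $\widehat\Gamma$ separates points of $\Gamma$ and a suitable polynomial-type nonvanishing lemma), possibly after passing to a quotient by a subgroup to make the relevant group a torus $\ST^n$ times a finite group. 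The uniform lower frame bound then follows by a compactness argument: partition $Q_\Gamma$ into finitely many measurable pieces on which the support set $S(\omega)$ is constant and $\omega\mapsto v_j(\omega)$ is continuous; on each piece the least singular value of the submatrix is a continuous, nowhere-vanishing function, hence bounded below on the closure, and one takes the minimum over the finitely many pieces. The upper (Bessel) bound is immediate since there are finitely many $\gamma_k$ and $|v_j(\omega)|=1$.

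The main obstacle I expect is the uniform lower frame bound in part (2): producing a single choice of $a_1,\dots,a_\ell$ that simultaneously makes every one of the (finitely many possible) support patterns $S(\omega)$ give a well-conditioned submatrix, uniformly over $\omega$. The delicate points are (a) proving the "generic nondegeneracy" statement in a general second-countable LCA group rather than on $\SR^d$ or $\ST^d$ — this requires reducing, via the structure theorem or directly via a finitely generated subgroup of $\Gamma$ containing $\gamma_1,\dots,\gamma_N$ and its dual torus-by-finite quotient of $G$, to a setting where an analytic-nonvanishing / Vandermonde argument applies; and (b) arguing that $\omega\mapsto S(\omega)$ is, up to a null set, locally constant on a finite measurable partition so that the compactness/continuity argument for the smallest singular value goes through — here one uses that $\partial\Omega$ need not be null (unlike Greṣptad–Lev but like Kolountzakis), so one works with measurable, not topological, partitions and invokes that a continuous nonvanishing function on a set is bounded below on that set only after restricting to where it is genuinely continuous; the cleanest route is to absorb the measure-zero discrepancies into the frame inequality directly. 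Everything else — the Zak-transform identity, the dimension count in part (1), the Bessel bound — is routine once the setup is fixed.
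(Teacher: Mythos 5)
Your part (1) is essentially the paper's argument: fiberize, identify the fiber space over $\omega$ with the $F_{\Omega,\Gamma}(\omega)$-dimensional coordinate subspace of $\ell^2(\Gamma)$ supported on $\{\gamma:\omega+\gamma\in\Omega\}$, and count dimensions. That part is fine. For part (2), however, you take a genuinely different route from the paper, and it is the route with the gap. The paper does \emph{not} re-prove any generic-position statement: it first proves a purely combinatorial/measure-theoretic completion lemma (its Proposition 2.2): since $\Omega$ is bounded, only finitely many $\gamma\in\Gamma$ meet $\Omega-Q_\Gamma$, and one can decompose $Q_\Gamma$ according to the support pattern $B=\{\gamma:\omega+\gamma\in\Omega\}$ and pad each pattern with $\ell-\#B$ extra lattice points to build a bounded $\Delta\supset\Omega$ with $(\Delta,\Gamma)$ an exact $\ell$-tiling pair. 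It then cites the known LCA multitiling theorem (Agora--Antezana--Cabrelli, Theorem 4.1) to get a Riesz basis $E_\Lambda(a_1,\dots,a_\ell)$ for $L^2(\Delta)$, and restricts (i.e., projects onto $L^2(\Omega)$) to obtain a frame. All the hard analytic content is thereby outsourced to a theorem that is already proved.

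The gap in your plan is precisely the step you flag as obstacle (a): the ``generic nondegeneracy'' of the character matrices in an arbitrary second countable LCA group. The naive ``bad set of $(a_1,\dots,a_\ell)$ has measure zero via a Vandermonde/analyticity argument'' is false in general: for a nontrivial $\gamma\in\Gamma$ the set $\{a\in G:(a,\gamma)=1\}$ is a proper closed subgroup, which need not be null (it can be open, e.g.\ when $\gamma$ has finite order, as happens for groups with torsion or compact factors), so the degeneracy locus of a single $2\times 2$ minor can already have positive measure. Making this work in general is exactly the content of the Agora--Antezana--Cabrelli theorem, and you would in effect be re-proving it for rectangular matrices rather than invoking it; you give no actual construction of the $a_j$ beyond ``general position.'' Two smaller remarks: your obstacle (b) is not an obstacle at all, because $T_\omega T_\omega^*$ depends on $\omega$ only through the (finitely many) support patterns --- the paper's factorization $T_\omega=E_\omega U_\omega$ with $U_\omega$ unitary makes the uniform bounds automatic; and your rectangular rank conditions reduce to invertibility of $\ell\times\ell$ matrices obtained by completing each support pattern to an $\ell$-element set, which is morally the paper's completion lemma in disguise. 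If you insert that completion at the level of \emph{sets} rather than matrices, you can simply quote the known multitiling result and your proof closes.
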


\begin{Remark}
Recall that any locally compact and second countable group is metrizable, and its metric can be chosen to be invariant under the group action (see \cite{HR79}, Theorem 8.3). Thus, it makes sense to talk about bounded sets in the group $\widehat G.$
\end{Remark}

The proof of Theorem \ref{Theorem1} will be given in Section \ref{ProofTheorem1}. In Section \ref{Sec-optimal} we give other conditions for a set of exponentials of the form $E_\Lambda(a_1,\dots,a_\ell)$ to  be a frame of $L^2(\Omega)$ and provide expressions to compute the frame bounds.

\

{\bf Acknowledgements}. The research of D. Barbieri and E. Hern\'andez is supported by Grants MTM2013-40945-P and MTM2016-76566-P (Ministerio de Econom\'ia y Competitividad, Spain).The research of
C.~Cabrelli, U.~Molter and C.~Mosquera is partially supported by
Grants  PICT 2014-1480 (ANPCyT), PIP 11220150100355 (CONICET) Argentina, and UBACyT 20020130100422BA. P. Luthy was supported by Grant MTM2013-40945-P while this research started at UAM.

\medskip

\section{Proof of Theorem \ref{Theorem1}} \label{ProofTheorem1}

We start with a result that will be used in the proof of part (2) of Theorem \ref{Theorem1}

\begin{proposition} \label{Prop1.1}
If $\Omega$ is a measurable, {\bf bounded} set in $\widehat G$ and $\Gamma$ is a uniform lattice in $\widehat G$ such that $(\Omega , \Gamma)$ is an $\ell$-subtiling pair for $\widehat G$, there exists a {\bf bounded} measurable set $\Delta \subset \widehat G$  such that $\Omega \subset \Delta$ and $(\Delta, \Gamma)$ is an $\ell$-tiling pair for $\widehat G.$
\end{proposition}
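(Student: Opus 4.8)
The plan is to ``fill in the holes'' of $\Omega$ so as to raise the covering function $F_{\Omega,\Gamma}$ from taking values in $\{0,1,\dots,\ell\}$ up to the constant value $\ell$, while keeping the enlarged set bounded. First, since $\Omega$ is bounded and $\Gamma$ is a uniform lattice, there is a finite subset $\{\gamma_1,\dots,\gamma_N\}\subset\Gamma$ such that $\Omega\subset\bigcup_{k=1}^N(Q_\Gamma+\gamma_k)$; here $Q_\Gamma$ is a fixed relatively compact fundamental domain (which we may take bounded, using the invariant metric mentioned in the Remark). Equivalently, the ``shadow'' of $\Omega$ in the quotient $\widehat G/\Gamma$ is contained in a relatively compact set, and we will build $\Delta$ inside the bounded set $W:=\bigcup_{k=1}^N(Q_\Gamma+\gamma_k)$ — or rather inside a slightly larger bounded set $\widetilde W$ obtained by adjoining finitely many more translates of $Q_\Gamma$, enough translates that each fiber of $\widetilde W$ over $\widehat G/\Gamma$ meets $\ell$ distinct copies of $Q_\Gamma$. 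Thus $F_{\widetilde W,\Gamma}(\omega)\ge \ell$ for a.e.\ $\omega$, and $\widetilde W\supset\Omega$ is bounded.

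The core step is then a measurable selection argument on the fundamental domain $Q_\Gamma$. For a.e.\ $\omega\in Q_\Gamma$ we have $F_{\Omega,\Gamma}(\omega)=:m(\omega)\le\ell$ and $F_{\widetilde W,\Gamma}(\omega)\ge\ell$; we must choose, measurably in $\omega$, exactly $\ell-m(\omega)$ of the ``free'' translates $\gamma$ (those with $\omega+\gamma\in\widetilde W\setminus\Omega$) to keep, discarding the rest. Concretely, enumerate the finitely many relevant lattice points $\gamma\in\Gamma$ with $(Q_\Gamma+\gamma)\cap\widetilde W\ne\emptyset$ as $\beta_1,\beta_2,\dots,\beta_M$ in a fixed order, and for $\omega\in Q_\Gamma$ define a set $S(\omega)\subset\{1,\dots,M\}$ by a greedy rule: include $j$ iff $\omega+\beta_j\in\Omega$, or ($\omega+\beta_j\in\widetilde W\setminus\Omega$ and fewer than $\ell-m(\omega)$ indices $j'<j$ with $\omega+\beta_{j'}\in\widetilde W\setminus\Omega$ have already been included). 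The sets $\{\omega\in Q_\Gamma : j\in S(\omega)\}$ are measurable, being defined by finitely many conditions of the form $\omega+\beta_{j'}\in\Omega$ or $\in\widetilde W$ (measurable subsets intersected with $Q_\Gamma$) together with cardinality comparisons among such indicator functions. Now set
\[
\Delta:=\bigcup_{j=1}^M\bigl\{\omega+\beta_j : \omega\in Q_\Gamma,\ j\in S(\omega)\bigr\}.
\]
Then $\Delta$ is measurable and bounded (it is contained in $\widetilde W$), it contains $\Omega$ because every $j$ with $\omega+\beta_j\in\Omega$ is always selected, and by construction $\Card\{\gamma\in\Gamma : \omega+\gamma\in\Delta\}=m(\omega)+(\ell-m(\omega))=\ell$ for a.e.\ $\omega\in Q_\Gamma$, i.e.\ $F_{\Delta,\Gamma}\equiv\ell$ a.e., so $(\Delta,\Gamma)$ is an $\ell$-tiling pair for $\widehat G$.

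I expect the main obstacle to be the measurability bookkeeping: one must verify that $\omega\mapsto m(\omega)=\sum_{j}\chi_\Omega(\omega+\beta_j)$ and the partial sums $\sum_{j'<j}\chi_{\widetilde W\setminus\Omega}(\omega+\beta_{j'})$ are genuinely measurable on $Q_\Gamma$ — which follows since $Q_\Gamma$ is a Borel section of $\widehat G/\Gamma$ and translation by $\beta_{j'}$ is a homeomorphism — and then that the greedy selection, phrased as a finite decision tree on these integer-valued measurable functions, produces Borel sets. A secondary point requiring a line of care is the reduction to finitely many lattice translates: this uses that $\Omega$ is bounded and $\widehat G/\Gamma$ is compact (co-compactness of $\Gamma$), together with the fact that a bounded set in the invariant metric is covered by finitely many translates of the relatively compact fundamental domain $Q_\Gamma$. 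Once those two points are in place, the verification that $\Delta$ has all the asserted properties is immediate from the construction.
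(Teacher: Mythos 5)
Your proposal is correct and follows essentially the same strategy as the paper: restrict attention to a bounded fundamental domain, use boundedness of $\Omega$ to reduce to finitely many lattice translates, and then make a measurable selection of $\ell-F_{\Omega,\Gamma}(\w)$ additional translates to top the covering function up to the constant $\ell$. The only difference is bookkeeping — the paper partitions $Q_\Gamma$ into the finitely many sets $Q_k(B)$ according to which translates already meet $\Omega$ and fixes one extension $\widetilde B$ per pattern, while you achieve the same measurable selection via a greedy rule on a fixed enumeration, drawing the extra translates from an enlarged union of copies of $Q_\Gamma$ rather than from the translates already meeting $\Omega$.
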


\begin{proof}
Let $Q_\Gamma$ be a fundamental domain of $\Gamma$ in $\widehat G.$
Modifying $\Omega$ in a set of measure zero, we can assume that $\sup_{\omega \in Q_\Gamma} F_{\Omega, \Gamma} (\w) = \ell.$
Define
$\widetilde \Gamma = \{\gamma\in \Gamma: \omega + \gamma \in \Omega \ \mbox{for some} \ \omega \in Q_\Gamma  \}.$
Since $\Omega$ is bounded, the set $\widetilde \Gamma$ is finite and, by the definition of $\ell$-subtiling pair, has at least $\ell$ different elements.
	
Set $Q_k = \{\w\in Q_{\Gamma}: F_{\Omega,\Gamma}(\w)=k\}$ for $k=0,1,...,\ell.$ Clearly
$$Q_\Gamma = \bigcup_{k=0}^\ell Q_k\, ,$$ and the union is  disjoint.

Now, for $k=1, \dots, \ell ,$
let $\B_k=\{B\subset \widetilde \Gamma: \#B=k\}.$
For $B \in \B_k$ set
$$Q_k(B) = \{\w \in Q_k : \w + \gamma \in \Omega, \; \ \mbox{for all} \  \gamma \in B\}.$$ 
Since $\Omega$ is measurable, $Q_k$ is measurable and since $Q_k(B) = \bigcap _{\gamma\in B} ((\Omega-\gamma )\cap Q_k),$
then $Q_k(B)$ is also measurable. Observe that the collection $\B_k$ is finite since $\widetilde \Gamma$ is finite. Also
if $B$ and $B'$ are different sets in $\B_k$  then $Q_k(B)\cap Q_k(B')=\emptyset.$ Indeed, if $\omega \in Q_k(B)\cap Q_k(B')$, $\omega + \gamma \in \Omega$ for all $\gamma \in B$ and $\omega + \gamma' \in \Omega$ for all $\gamma' \in B'$. Since $B \neq B'$, there exists $\gamma_1 \in B'\setminus B$. Then, since $\omega \in Q_k,$
$$
k = \sum_{\gamma\in \Gamma}\chi_\Omega(\omega + \gamma) \geq \sum_{\gamma\in B}\chi_\Omega(\omega + \gamma) + \chi_{\Omega}(\omega + \gamma_1) = k + 1\,,
$$
which is a contradiction. Observe that $Q_k = \bigcup_{B\in \B_k} Q_k(B)$, $k=1, \dots, \ell$, and the union is disjoint. Therefore,
\begin{equation} \label{Eq2.1}
\Omega = \bigcup_{k=1}^{\ell} \;\bigcup_{B\in \B_k} \bigcup_{\gamma \in B} Q_k(B) +\gamma\,,
\end{equation}
and the union is disjoint.

For $k=1,\dots,\ell$ and $B\in\B_k$, we extend $B \subseteq \widetilde \Gamma$ to $\widetilde{B}$ by inserting $\ell-k$ distinct elements from $\widetilde \Gamma \setminus B$ into $B$. Let $\widetilde B_0$ be a set of $\ell$ different elements from $\widetilde \Gamma.$ We recall here that  $\# \widetilde\Gamma \geq \ell$ since  $\sup F_{\Omega,\Gamma}=\ell$.
	
Finally we define: 
	
$$\Delta = \Big(\bigcup_{\gamma\in \widetilde B_0} Q_0 + \gamma\Big) \cup \Big(\bigcup_{k=1}^{\ell} \;\bigcup_{B\in \B_k}  \bigcup_{\gamma \in \widetilde{B}} Q_k(B) +\gamma\Big).$$
	
The set $\Delta$ is measurable since it is a finite union of measurable sets. From \eqref{Eq2.1} it is clear that $\Omega \subset \Delta.$ Moreover, if $\w \in Q_k(B),$ for some $B\in \B_k,$ $\w+\gamma \in \Omega$ only when $\gamma \in B.$ Hence, if $\w \in Q_k(B),$ $\w + \widetilde\gamma \in \Delta$ only when $\widetilde \gamma \in \widetilde B.$ Since $\widetilde B$ has precisely $\ell$ elements, if $\w\in Q_k(B)$,
$$
\sum_{\gamma \in \Gamma} \chi_{\Delta}(\w+\gamma) = \sum_{\widetilde \gamma \in \widetilde B}\chi_{\Delta}(\w+ \widetilde\gamma) = \ell\,.
$$
Also, if $\w \in Q_0$
$$
\sum_{\gamma \in \Gamma} \chi_{\Delta}(\w+\gamma) = \sum_{\widetilde \gamma \in \widetilde B_0}\chi_{\Delta}(\w+ \widetilde\gamma) = \ell\,.
$$
Taking into account that $Q_\Gamma = \bigcup_{k=0}^\ell Q_k = Q_0 \cup \Big(
\bigcup_{k=1}^\ell\;\bigcup_{B\in \B_k}  Q_k(B)\Big)\,$ is a disjoint union, we conclude that for $\w\in Q_\Gamma$, $\sum_{\gamma\in \Gamma} \chi_{\Delta}(\w+\gamma) = \ell\,, $ proving that $(\Delta, \Gamma)$ is an $\ell$-tiling pair for $\widehat G.$
\end{proof} 

\begin{Remark}
	The $\ell$-tile found in Proposition \ref{Prop1.1} is not necessarily unique. It depends on the choice of the sets $\widetilde {B}$ and $\widetilde B_0.$
\end{Remark}

For the proof of part (2) of Theorem \ref{Theorem1} we will use the fiberization mapping $\mathcal T : L^2(G) \longrightarrow L^2(Q_\Gamma, \ell^2(\Gamma))$ given by
\begin{equation} \label{Eq2.2}
\mathcal T f(\w) = \{\widehat f(\w + \gamma)\}_{\gamma \in \Gamma} \in \ell^2(\Gamma)\,, \quad \w\in Q_\Gamma.
\end{equation}
The mapping $\mathcal T$ is an isometry and satisfies
\begin{equation} \label{Eq2.3}
\mathcal T(t_\lambda f)(\w) = e_{-\lambda}(\w)\mathcal T f(\w)\,,\quad \lambda\in \Lambda\,, f\in L^2(G)\,, 
\end{equation}
(see Proposition 3.3 and Remark 3.12 in \cite{CP10}), where  $t_{\l}$ denotes the translation by $\l$ that is $t_{\l} f(g) = f (g - \l).$

The next result is Theorem 4.1 of \cite{CP10} adapted to our situation. For $\p_1, \dots, \p_\ell \in L^2(G)$ denote by
$$
S_\Lambda(\p_1, \dots, \p_\ell) :=\overline {\mbox{span}} \{t_\lambda \p_j : \lambda \in \Lambda, j = 1, \dots, \ell\}
$$
the $\Lambda$-invariant space generated by $\{\p_1, \dots, \p_\ell\}\,.$ The measurable range function associated to $ S_\Lambda(\p_1, \dots, \p_\ell) $ is
\begin{equation} \label{Eq2.4}
J(\w) = \overline {\mbox{span}} \{\mathcal T \p_1 (\w), \dots, \mathcal T \p_\ell (\w) \} \subset \ell^2(\Gamma)\,, \quad \w \in Q_\Gamma\,.
\end{equation}
 
\begin{proposition} \label{Prop2.3}  
	Let $\p_1, \dots, \p_\ell \in L^2(G)$ and let $J(\w)$ be the measurable range function associated to $S_\Lambda(\p_1, \dots, \p_\ell)$ as in \eqref{Eq2.4}. Let $0 < A \leq B < \infty\,.$ The following statements are equivalent:
	
	(i) The set $\{t_\lambda \p_j : \lambda \in \Lambda, j = 1, \dots, \ell\}$ is a frame for $ S_\Lambda(\p_1, \dots, \p_\ell) $  with frame bounds $A$ and $B$.
	
	(ii) For almost every $\w \in Q_\Gamma$ the set $\{\mathcal T \p_1 (\w), \dots, \mathcal T \p_\ell (\w) \} \subset \ell^2(\Gamma)\,$ is a frame for $J(\w)$ with frame bounds $A |Q_\Gamma|^{-1}$ and $B |Q_\Gamma|^{-1}\,.$
\end{proposition}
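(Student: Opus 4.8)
The plan is to invoke the general Gramian-based characterization of shift-invariant (here $\Lambda$-invariant) systems being frames for their closed span, transported through the fiberization $\mathcal T$ of \eqref{Eq2.2}. The two ingredients are: (a) $\mathcal T$ is an isometry intertwining $\Lambda$-translation with pointwise multiplication by $e_{-\lambda}$, as recorded in \eqref{Eq2.3}; and (b) for a system of the form $\{e_{-\lambda}(\w)\, v_j(\w) : \lambda\in\Lambda, j=1,\dots,\ell\}$ inside $L^2(Q_\Gamma,\ell^2(\Gamma))$, the frame inequalities with constants $A,B$ for the generated space are equivalent to the a.e. fiberwise frame inequalities with constants $A|Q_\Gamma|^{-1}, B|Q_\Gamma|^{-1}$ for $J(\w)=\overline{\mbox{span}}\{v_1(\w),\dots,v_\ell(\w)\}$. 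This is exactly the content of Theorem 4.1 of \cite{CP10} specialized to $\Phi=\{\p_1,\dots,\p_\ell\}$; the only thing to check is that their normalization conventions produce the factor $|Q_\Gamma|^{-1}$ appearing here, which comes from the fact that $\mathcal T$ as normalized in \eqref{Eq2.2} sends $f$ to the untwisted fiber sequence while the Fourier-side measure on $Q_\Gamma$ has total mass $|Q_\Gamma|$, so the Plancherel-type identity reads $\|f\|_{L^2(G)}^2 = |Q_\Gamma|^{-1}\int_{Q_\Gamma}\|\mathcal T f(\w)\|_{\ell^2(\Gamma)}^2\,d\w$ (or equivalently the exponentials $\{e_{-\lambda}\}$ are an orthogonal, not orthonormal, basis of $L^2(Q_\Gamma)$ with square-norm $|Q_\Gamma|$).

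Concretely I would proceed as follows. First, set $v_j(\w)=\mathcal T\p_j(\w)\in\ell^2(\Gamma)$ and note that, by \eqref{Eq2.3} and linearity, $\mathcal T(t_\lambda\p_j)(\w)=e_{-\lambda}(\w)v_j(\w)$. Since $\mathcal T$ is an isometry, for any $f\in S_\Lambda(\p_1,\dots,\p_\ell)$ we have $\|f\|^2 = \|\mathcal Tf\|^2_{L^2(Q_\Gamma,\ell^2(\Gamma))}$ and $\langle f, t_\lambda\p_j\rangle = \langle \mathcal Tf, e_{-\lambda}v_j\rangle = \int_{Q_\Gamma} e_\lambda(\w)\langle \mathcal Tf(\w), v_j(\w)\rangle_{\ell^2(\Gamma)}\,d\w$. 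For fixed $j$, the function $\w\mapsto \langle\mathcal Tf(\w),v_j(\w)\rangle$ lies in $L^1(Q_\Gamma)$ (Cauchy–Schwarz plus finiteness of $|Q_\Gamma|$ and square-integrability), so the displayed integral is its Fourier coefficient at $\lambda$; summing $\sum_{\lambda\in\Lambda}|\langle f,t_\lambda\p_j\rangle|^2$ and using the Parseval/Plancherel identity for the orthogonal system $\{e_\lambda\}$ on $L^2(Q_\Gamma)$ (normalization constant $|Q_\Gamma|$) gives
\[
\sum_{\lambda\in\Lambda}|\langle f, t_\lambda\p_j\rangle|^2 = |Q_\Gamma|\int_{Q_\Gamma} \big|\langle \mathcal Tf(\w), v_j(\w)\rangle_{\ell^2(\Gamma)}\big|^2\,d\w.
\]
Summing over $j=1,\dots,\ell$ and applying Tonelli to interchange sum and integral,
\[
\sum_{j=1}^{\ell}\sum_{\lambda\in\Lambda}|\langle f, t_\lambda\p_j\rangle|^2 = |Q_\Gamma|\int_{Q_\Gamma} \sum_{j=1}^{\ell}\big|\langle \mathcal Tf(\w), v_j(\w)\rangle\big|^2\,d\w.
\]

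Now I would argue the equivalence in both directions. Assume (ii): for a.e.\ $\w$, $\{v_j(\w)\}_{j=1}^\ell$ is a frame for $J(\w)$ with bounds $A|Q_\Gamma|^{-1}\le\cdot\le B|Q_\Gamma|^{-1}$. Given $f\in S_\Lambda(\p_1,\dots,\p_\ell)$, the range-function description of $\Lambda$-invariant spaces (Theorem 3.10 / Proposition 3.11 of \cite{CP10}, or the classical Helson/Bownik machinery) gives $\mathcal Tf(\w)\in J(\w)$ for a.e.\ $\w$. Plugging the fiberwise frame inequalities into the integrand of the last display and integrating, using $\|f\|^2=|Q_\Gamma|^{-1}\int_{Q_\Gamma}\|\mathcal Tf(\w)\|^2\,d\w$, yields $A\|f\|^2\le \sum_{j,\lambda}|\langle f,t_\lambda\p_j\rangle|^2\le B\|f\|^2$, i.e.\ (i). Conversely, assume (i). The upper bound in (ii) is the standard Bessel-to-fiberwise-Bessel argument: testing (i) against $f=\mathcal T^{-1}(g)$ for $g$ supported on a small measurable subset $\sigma\subset Q_\Gamma$ with $g(\w)\in J(\w)$, dividing by $|\sigma|$ and letting $\sigma$ shrink to a Lebesgue point forces $\sum_j|\langle w,v_j(\w)\rangle|^2\le B|Q_\Gamma|^{-1}\|w\|^2$ for a.e.\ $\w$ and all $w\in J(\w)$ (a countable dense set of $w$'s suffices by separability of $\ell^2(\Gamma)$ and measurability of $J$). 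For the lower bound one uses that $\mathcal T$ maps $S_\Lambda(\p_1,\dots,\p_\ell)$ onto $\{h\in L^2(Q_\Gamma,\ell^2(\Gamma)): h(\w)\in J(\w)\ \text{a.e.}\}$ surjectively, so the same localization argument with the reverse inequality, applied on the set where the lower fiber bound would fail, contradicts (i).

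The routine parts are the measure-theoretic bookkeeping (Tonelli, Lebesgue points, separability reductions) and the normalization constant; the one genuinely structural input — and the step I'd flag as the crux — is the identification $\mathcal T(S_\Lambda(\p_1,\dots,\p_\ell)) = \{h : h(\w)\in J(\w)\ \text{a.e.}\}$, i.e.\ that the fiberization exactly realizes the $\Lambda$-invariant space as the measurable field of fibers $J(\w)$. This is precisely where one must cite the range-function theory of \cite{CP10} (Theorem 3.10 and the characterization of the measurable range function, Eq.\ \eqref{Eq2.4}); granting it, the proposition is immediate from the displayed identity. In fact the cleanest exposition is simply to say: this is Theorem 4.1 of \cite{CP10} applied to the generating set $\{\p_1,\dots,\p_\ell\}$ of $S_\Lambda(\p_1,\dots,\p_\ell)$, with the frame-bound normalization adjusted by $|Q_\Gamma|^{-1}$ to match the fiberization \eqref{Eq2.2}–\eqref{Eq2.3} used here.
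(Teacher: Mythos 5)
Your route is the same as the paper's: use that $\mathcal T$ is an isometry intertwining $t_\lambda$ with multiplication by $e_{-\lambda}$, derive the identity $\sum_{\lambda,j}|\langle f,t_\lambda\p_j\rangle|^2=|Q_\Gamma|\sum_j\int_{Q_\Gamma}|\langle\mathcal Tf(\w),\mathcal T\p_j(\w)\rangle|^2\,d\w$ from the fact that $\{|Q_\Gamma|^{-1/2}e_\lambda\}_{\lambda\in\Lambda}$ is an orthonormal basis of $L^2(Q_\Gamma)$, and then conclude by the fiberwise argument of Theorem 4.1 of \cite{CP10}. The paper stops at exactly this identity and defers the rest to \cite{CP10}; you spell out both directions, correctly identifying the one structural input that must be cited, namely $\mathcal T\bigl(S_\Lambda(\p_1,\dots,\p_\ell)\bigr)=\{h: h(\w)\in J(\w)\ \text{a.e.}\}$ from the range-function theory.

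There is, however, one concrete error. You assert (twice) that $\|f\|_{L^2(G)}^2=|Q_\Gamma|^{-1}\int_{Q_\Gamma}\|\mathcal Tf(\w)\|_{\ell^2(\Gamma)}^2\,d\w$ and present this as ``equivalent'' to the exponentials $e_\lambda$ having $L^2(Q_\Gamma)$-norm $|Q_\Gamma|^{1/2}$. These are two distinct normalization statements, and the first is false in the conventions of this paper: $\mathcal T$ as defined in \eqref{Eq2.2} is a genuine isometry, $\|f\|^2=\int_{Q_\Gamma}\|\mathcal Tf(\w)\|^2\,d\w$, which you in fact state correctly in your second paragraph. The factor $|Q_\Gamma|^{-1}$ in statement (ii) comes \emph{only} from the non-normalized exponentials. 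If you carry out the (ii)$\Rightarrow$(i) step with your modified Plancherel identity, as written, the fiberwise bounds $A|Q_\Gamma|^{-1}$, $B|Q_\Gamma|^{-1}$ integrate to global frame bounds $A|Q_\Gamma|$, $B|Q_\Gamma|$ rather than $A$, $B$, so the constants in the proposition would come out wrong. Using the isometry consistently fixes this; everything else in the argument is sound and matches the paper.
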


\begin{proof}
Let $f\in S_\Lambda(\p_1, \dots, \p_\ell) $. Use that the fiberization mapping given in \eqref{Eq2.2} is an isometry satisfying \eqref{Eq2.3} to write
\begin{eqnarray}
\sum_{\lambda \in \Lambda} \sum_{j= 1}^\ell |\langle t_\lambda \p_j , f \rangle_{L^2(G)}|^2 & =& \sum_{\lambda \in \Lambda} \sum_{j = 1}^\ell |\langle \mathcal T(t_\lambda \p_j) , \mathcal T f\rangle_{L^2(Q_\Gamma, \ell^2(\Gamma))}|^2 \nonumber \\
&=& \sum_{j= 1}^\ell \sum_{\lambda \in \Lambda}  \Big|\int_{Q_\Gamma} e_{-\lambda}(\w)
\langle \mathcal T(\p_j)(\w) , \mathcal T f(\w)\rangle_{\ell^2(\Gamma)}\,d\w\Big|^2\,. \nonumber
\end{eqnarray}
Since $\{\frac{1}{\sqrt{|Q_\Gamma|}} e_\lambda(\w) : \lambda \in \Lambda  \}$ is an orthonormal basis of $L^2(Q_\Gamma)$ it follows that
$$
\sum_{\lambda \in \Lambda} \sum_{j = 1}^\ell |\langle t_\lambda \p_j , f \rangle_{L^2(G)}|^2 = |Q_\Gamma| \sum_{j=1}^\ell \int_{Q_\Gamma} |\langle \mathcal T\p_j(\w) , \mathcal T f(\w)\rangle_{\ell^2(\Gamma)}|^2 \, d\w\,.
$$
From here, the proof continues as in the proof of Theorem 4.1 in \cite{CP10}. Details are left to the reader.
\end{proof}

\begin{Remark}
	Notice that the factor $|Q_\Gamma|^{-1}$ that appears in $(ii)$ of Proposition \ref{Prop2.3} does not appear in Theorem 4.1 of \cite{CP10}. This is due to the fact that in \cite{CP10} the measure of $Q_\Gamma$ is normalized (see the beginning of Section 3 in \cite{CP10}). Although this fact is not important to prove (2) of Theorem \ref{Theorem1}, it will be crucial in Section 3 to obtain optimal frame bounds of sets of exponentials.
\end{Remark}

\

{\bf Proof of Theorem \ref{Theorem1}}
	
(1) Assume that $E_\Lambda(a_1, \dots, a_\ell)$ is a frame for $L^2(\Omega).$
We define $\varphi \in L^2(G)$ by
	\[
	\widehat{\varphi}:={\chi}_{\Omega}, \quad \text{ and } \quad \varphi_j:= t_{- a_j}\varphi, \quad j=1,\dots, \ell,
	\]
where $t_{a_j}$ denotes the translation by $a_j,$ that is $t_{a_j} \varphi (g)= \varphi (g - a_j).$
	
Since $E_\Lambda(a_1, \dots, a_\ell)$ is a frame of $L^2(\Omega),$ we have that $\{t_{\lambda}\varphi_{j}\colon \lambda\in\Lambda, j=1,\dots, \ell \}$ is a frame of the Paley-Wiener space $PW_{\Omega}:=\{f\in L^2(G)\colon \widehat{f}\in L^2(\Omega)\} = \{f\in L^2(G)\colon  \widehat{f}(\omega)=0, a.e. \  w\in \widehat{G} \setminus \Omega\}.$ This follows from the definition of frame and the fact that for $f\in PW_\Omega$ one has $\|f\|_{L^2(G)} = \|\widehat f\|_{L^2(\Omega)}$ and $\langle f, t_\lambda\varphi_j \rangle_{L^2(G)} = \langle \widehat f , e_{-\lambda +a_j} \rangle_{L^2(\Omega)}.$

In particular,
	\[
	PW_{\Omega} = S_{\Lambda}(\varphi_1,\dots, \varphi_{\ell}):= \overline{\mbox{span}}\{t_{\lambda}\varphi_{j}\colon \lambda\in\Lambda, j=1,\dots, \ell \}.
	\]
That is, $V:=PW_{\Omega}$ is a finitely generated $\Lambda$-invariant space. Denote by $J_V$ the measurable range function of $V$ as given in \eqref{Eq2.4} (see also \cite{CP10}, Section 3, for details).
We now use the fiberization mapping $\mathcal T : L^2(G) \longrightarrow L^2(Q_\Gamma, \ell^2(\Gamma))$ defined in \eqref{Eq2.2}.

By Proposition \ref{Prop2.3},  for a.e.
$\w\in Q_{\Gamma}$ the sequences $\{\mathcal T \varphi_1(\w),\dots, \mathcal T\varphi_{\ell}(\w)\}$ form a frame of $J_V(\w) \subseteq \ell^2(\Gamma)$. Therefore, $\dim(J_V(\w))\le \ell,$ for a.e. $\w\in Q_{\Gamma}.$
	
In our particular situation there is another description of the range function $J_V(\w)$ associated to $V$.	
For each $\w\in Q_{\Gamma},$ define
	\[
	\theta_{\w}:= \{\gamma\in\Gamma\colon \chi_{\Omega}(\w+\gamma)\neq 0 \}, \mbox{ and } \ell_{\w}:=\#\theta_{\w}.
	\]
Write $\ell_{\w} =0$ if $\theta_\omega = \emptyset.$ Then, there exist $\gamma_1(\w), \dots, \gamma_{\ell_{\w}}(\w)\in\Gamma$ such that $w+\gamma_j(\w)\in\Omega,$ for all
	$j=1, \dots, \ell_{\w},$ which implies that $J_V(\w)\subseteq \ell^2(\{\delta_{\gamma_1(\w)}, \dots, \delta_{\gamma_{\ell_{\w}}(\w)}\}),$ for a.e.
	$\w\in Q_{\Gamma}.$
	Moreover, as in Corollary 2.8. of \cite{AAC15},
	$J_V(\w)=\ell^2(\{\delta_{\gamma_1(\w)}, \dots, \delta_{\gamma_{\ell_{\w}}(\w)}\}),$ for a.e. $\w\in Q_{\Gamma}.$
Thus, $\dim(J_V(\w))=\ell_{\w},$ which implies that $\ell_{\w}\le\ell,$ for a.e. $\w\in Q_{\Gamma},$ and therefore we obtain that
$$
	F_{\Omega,\Gamma}(\w)=\sum_{\gamma\in\Gamma} \chi_{\Omega}(\w+\gamma)\le\ell, \quad \mbox{ for a.e. }\w\in Q_{\Gamma}\,. 
$$
This shows that $(\Omega, \Gamma)$ is an $m$-subtiling pair for $\widehat G$ with $m \leq \ell.$ 
	
(2) Since $\Omega$ is bounded, by Proposition \ref{Prop1.1} there exists a bounded set $\Delta$ containing $\Omega$ which is an $\ell$-tile of $\widehat G$ by $\Gamma.$
Now using Theorem 4.1 of \cite{AAC15}, there exist $a_1, \dots , a_\ell \in G$ such that $ E_\Lambda (a_1, \dots , a_\ell)$
is a Riesz basis of $L^2(\Delta).$ As a consequence, $E_\Lambda (a_1, \dots , a_\ell)$ is a frame of $L^2(\Omega).$
	
\qed

\begin{Remark}
	Note that $\Omega$ does not need to be bounded: for example, $E_{\mathbb Z}(0)= \{e^{2\pi i k x} : k \in \mathbb Z\}$ is an orthonormal basis for $L^2(\Omega)$ for $\Omega = \bigcup_{n=0}^\infty n + (\frac{1}{2^{n+1}}, \frac{1}{2^{n}}]\subset \mathbb R$ and $\Omega$ is not bounded.
	However, for the proof of part (2) of Theorem \ref{Theorem1} we need  $\Omega$ to be bounded since the proof uses Proposition \ref{Prop1.1}.
	
\end{Remark}

\begin{Remark}
	Theorem \ref{Theorem1} for the case $\ell =1$ can be found in \cite{BHM2016}. In this case, the proof does not require making use of either the Paley-Wiener space of $\Omega$ or the range function associated to it as in the proof given above.
	
\end{Remark}

%
%

\begin{Remark}
In Part (1) of Theorem \ref{Theorem1} the inequality $m \leq \ell$ can be strict as the following example shows:
choose  $\Omega\subset \R^d$ such that $(\Omega , \mathbb Z^d)$ is an $\ell$-tiling pair for $\mathbb R^d$ and pick $a_1,\dots,a_{\ell}$ such that $E_{\Z^d}(a_1, \dots, a_\ell)$ is a Riesz basis
of $L^2(\Omega)$. Let $\Omega_0\subset \Omega$ be any subset of $\Omega$ such that $(\Omega_0 , \Z^d )$ is an  $(\ell$-1)-tiling pair of $\R^d$ (for example, remove from $\Omega$ a fundamental domain of $\Z^d$ in $\R^d).$
Then   $E_{\Z^d}(a_1, \dots, a_\ell)$ is a frame for $L^2(\Omega_0)$, and $(\Omega_0 , \Z^d )$ is not an $\ell$-subtiling pair for $\R^d.$
\end{Remark}

\

\section{Optimal frame bounds for sets of exponentials.}
\label{Sec-optimal}

The purpose of this section is to develop another condition guaranteeing when a set of exponentials of the form 
$$E_\Lambda(a_1, \dots, a_m) : =\{e_{a_j+\lambda}: j=1,2, \dots, m, \lambda \in \Lambda\}
$$
forms a frame for $L^2(\Omega)$, where $(\Omega, \Gamma)$ is an $\ell$-subtiling pair for $\widehat G$, as well as to find optimal frame bounds for this frame.

For the $\ell$-subtiling pair $(\Omega, \Gamma)$ of $\widehat G$, let $E$ be the set of measure zero in $Q_\Gamma$ such that $F_{\Omega, \Gamma} >  \ell$, and let $Q_0 := \{\w \in Q_\Gamma: F_{\Omega,\Gamma}(\w) = 0  \}.$
Let
$$
\widetilde{Q_\Gamma} := Q_\Gamma \setminus (Q_0 \cup E)\,.
$$
For each $\w \in \widetilde{Q_\Gamma} $ there exist $\ell_{\w}\leq \ell$ and $\ga_1(\w), \dots, \ga_{\ell_\w} (\w) \in \Gamma$ such that $\w + \ga_j(\w) \in \Omega$ for all $j=1, \dots, \ell_\w$ (see the proof of Theorem \ref{Theorem1}). Recall that
\begin{equation} \label{Eqellw}
\ell_{\w} := \#\{\gamma\in\Gamma\colon \chi_{\Omega}(\w+\gamma)\neq 0 \}\,. 
\end{equation}

Given $\p_1, \dots, \p_m \in PW_\Omega = \{f\in L^2(G): \widehat f \in L^2(\Omega)\}$, and $\w \in \widetilde Q_\Gamma $, consider the matrix
\begin{equation} \label{Eq3.1}
T_\omega = \left( \begin{array}{ccc}  \widehat \p_1(\w + \gamma_1(\w)) & \dots & \widehat \p_{m}(\w + \gamma_1(\w)) \\ \vdots &  &  \vdots \\ \widehat \p_1(\w + \gamma_{\ell_\w}(\w)) & \dots & \widehat \p_{m}(\w + \gamma_{\ell_\w}(\w))
\end{array}\right)\,
\end{equation}
of size $\ell_{\w}\times m.$
Assume that 
$$
\Phi_\Lambda : = \{t_\lambda \p_j: \lambda \in \Lambda, j=1, \dots, m\}
$$
is a frame for $S_\Lambda(\p_1, \cdots, \p_m)$.  By Proposition \ref{Prop2.3}, this is equivalent to having that for a.e. $\w\in Q_\Gamma$ the set
$$
\Phi_\w:= \{\mathcal T\p_j(\w):j=1, \dots, m\} \subset \ell^2(\Gamma)
$$
is a frame for $J(\w) = \overline {\mbox{span}} \{\mathcal T \p_1 (\w), \dots, \mathcal T \p_m (\w) \} \subset \ell^2(\Gamma)\,.$ 
Moreover, as in the proof of Theorem \ref{Theorem1}, for a. e. $\w \in Q_\Gamma$,
$J(\w) = \ell^2 (\{\delta_{\gamma_1(\w)}, \dots, \delta_{\gamma_{\ell_{\w}}(\w)}\})$ is a subspace of $\ell^2(\Gamma)$ of dimension $\ell_{\w}\,.$  (Notice that this implies $m \geq \ell$.) 

It is well known (see, for example, Proposition 3.18 in \cite{HKLW07}) that a frame in a finite dimensional Hilbert space is nothing but a generating set. Since the non-zero elements of $\mathcal T\p_j(\w)$ are precisely the $j$-th column of $T_\w$, $j=1, \dots, m\,,$ it follows that $\Phi_\Lambda$ is a frame for $S_\Lambda(\p_1, \cdots, \p_m)$ if and only if $\mbox{rank} \,( T_\w) = \ell_{\w}$ for a.e. $\w \in \widetilde Q_\Gamma.$

\

For $\w \in \widetilde{Q_\Gamma}$, let $\lambda_{min}(T_\w T_\w^*)$ and $\lambda_{max}(T_\w T_\w^*)$ respectively the minimal and maximal eigenvalues of $T_\w T_\w^*$. It is well known (see Proposition 3.27 in \cite{HKLW07}) that the optimal lower and upper frame bounds of $\Phi_\w$ are precisely $\lambda_{min}(T_\w T_\w^*)$ and $\lambda_{max}(T_\w T_\w^*)$ respectively. By Proposition \ref{Prop2.3} the optimal frame bounds for $\Phi_\Lambda$ are
\begin{equation} \label{Eq3.2}
A = |Q_\Gamma|\, \mbox{ess\,inf}_{\w \in \widetilde Q_\Gamma} \lambda_{min}(T_\w T_\w^*) \qquad \mbox{and} \qquad  B = |Q_\Gamma|\, \mbox{ess\,sup}_{\w \in \widetilde Q_\Gamma} \lambda_{max}(T_\w T_\w^*)\,.
\end{equation}
\

We have proved the following result:

\begin{proposition} \label{Pro3.1}
	With the notation and  definitions as above, the following are equivalent:
	 	
(i) The set $\Phi_\Lambda : = \{t_\lambda \p_j: \lambda \in \Lambda, j=1, \dots, m\}$ is a frame for $S_\Lambda(\p_1, \dots, \p_m)\,.$

(ii) The matrix $T_\w$ given in \eqref{Eq3.1} has rank $\ell_{\w}$ (see \eqref{Eqellw}) for a.e. $\w \in \widetilde Q_\Gamma.$

Moreover, in this situation, the optimal frame bounds $A$ and $B$ of $\Phi_\Lambda$ are given by \eqref{Eq3.2}.
\end{proposition}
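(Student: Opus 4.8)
The plan is to lower the whole question to the fibers of the fiberization $\mathcal T$ by means of Proposition~\ref{Prop2.3}, and then invoke the elementary theory of frames in finite--dimensional Hilbert spaces. Concretely, I would proceed as follows.

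First, by Proposition~\ref{Prop2.3}, the set $\Phi_\Lambda$ is a frame for $S_\Lambda(\p_1,\dots,\p_m)$ with bounds $A,B$ if and only if, for a.e.\ $\w\in Q_\Gamma$, the fiber set $\Phi_\w=\{\mathcal T\p_1(\w),\dots,\mathcal T\p_m(\w)\}$ is a frame for $J(\w)$ with bounds $A|Q_\Gamma|^{-1},B|Q_\Gamma|^{-1}$. Next I would pin down the fibers: as recalled above (and as in the proof of Theorem~\ref{Theorem1}, using Corollary~2.8 of \cite{AAC15}), for a.e.\ $\w\in\widetilde{Q_\Gamma}$ one has $J(\w)=\ell^2(\{\delta_{\ga_1(\w)},\dots,\delta_{\ga_{\ell_\w}(\w)}\})$, a subspace of $\ell^2(\Gamma)$ of dimension $\ell_\w$, while on $Q_0$ the fiber is $\{0\}$ (because $\widehat{\p_j}$ vanishes off $\Omega$, so $\mathcal T\p_j(\w)=0$ there) and $E$ is null. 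Using the orthonormal basis $\{\delta_{\ga_j(\w)}\}_{j=1}^{\ell_\w}$ to identify $J(\w)$ with $\SC^{\ell_\w}$, the vector $\mathcal T\p_j(\w)$ becomes precisely the $j$-th column of the matrix $T_\w$ of \eqref{Eq3.1}, so $\Phi_\w$ is carried onto the set of columns of $T_\w$.

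For the equivalence (i)$\Leftrightarrow$(ii): a finite family of vectors in a finite--dimensional inner product space is a frame for that space exactly when it is a generating set (Proposition~3.18 in \cite{HKLW07}); hence $\Phi_\w$ is a frame for $J(\w)$ iff the columns of $T_\w$ span $\SC^{\ell_\w}$, i.e.\ $\mbox{rank}(T_\w)=\ell_\w$, and requiring this for a.e.\ $\w\in\widetilde{Q_\Gamma}$ (the conditions on $Q_0$ and $E$ being vacuous) yields (i)$\Leftrightarrow$(ii). For the optimal bounds, under the above identification the frame operator of $\Phi_\w$ is $T_\w T_\w^*$, so the optimal lower and upper frame bounds of $\Phi_\w$ are $\lambda_{min}(T_\w T_\w^*)$ and $\lambda_{max}(T_\w T_\w^*)$ (Proposition~3.27 in \cite{HKLW07})---strictly positive precisely when $\mbox{rank}(T_\w)=\ell_\w$, consistent with (ii). Finally, by the quantitative form of Proposition~\ref{Prop2.3}, $A,B$ are frame bounds for $\Phi_\Lambda$ iff $A|Q_\Gamma|^{-1}$ is a lower and $B|Q_\Gamma|^{-1}$ an upper frame bound of $\Phi_\w$ for a.e.\ $\w$; the largest such $A$ and smallest such $B$ are $A=|Q_\Gamma|\,\mbox{ess\,inf}_{\w\in\widetilde{Q_\Gamma}}\lambda_{min}(T_\w T_\w^*)$ and $B=|Q_\Gamma|\,\mbox{ess\,sup}_{\w\in\widetilde{Q_\Gamma}}\lambda_{max}(T_\w T_\w^*)$, which is \eqref{Eq3.2}.

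I expect the real content to lie in two bookkeeping points rather than in a single hard step. The first is the identification of $J(\w)$ with the full coordinate subspace $\ell^2(\{\delta_{\ga_j(\w)}\})$ of dimension $\ell_\w$: this is what makes the rank of $T_\w$ (and not merely the dimension of the span of its columns) the correct invariant, and it is where the structure inherited from the proof of Theorem~\ref{Theorem1} and from \cite{AAC15} enters. The second is measurability: one must choose the enumeration $\w\mapsto(\ga_1(\w),\dots,\ga_{\ell_\w}(\w))$ of $\{\gamma:\w+\gamma\in\Omega\}$ measurably---possible since $\Gamma$ is countable and may be listed in a fixed order---so that $\w\mapsto T_\w$, $\w\mapsto\lambda_{min}(T_\w T_\w^*)$ and $\w\mapsto\lambda_{max}(T_\w T_\w^*)$ are measurable and the essential extrema in \eqref{Eq3.2} are meaningful, and then one checks that passing from $Q_\Gamma$ to $\widetilde{Q_\Gamma}$ leaves the bounds unchanged. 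As a side check one recovers $m\ge\ell$, since $T_\w$ is $\ell_\w\times m$ with $\ell_\w=\ell$ on a set of positive measure.
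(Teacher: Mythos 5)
Your argument is correct and follows essentially the same route as the paper: reduce to the fibers via Proposition~\ref{Prop2.3}, identify $J(\w)$ with the coordinate subspace $\ell^2(\{\delta_{\ga_1(\w)},\dots,\delta_{\ga_{\ell_\w}(\w)}\})$ so that $\Phi_\w$ becomes the column set of $T_\w$, and then apply the finite--dimensional facts that a frame is a spanning set and that the optimal bounds are the extreme eigenvalues of $T_\w T_\w^*$ (Propositions~3.18 and~3.27 of \cite{HKLW07}). Your additional remarks on the measurable enumeration of $\ga_j(\w)$ and on the triviality of the fibers over $Q_0$ are sensible bookkeeping that the paper leaves implicit, but they do not change the argument.
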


Consider now the set of exponentials
\[
E_\Lambda(a_1,\dots,a_m):= \{e_{\lambda+a_j} \colon \lambda\in\Lambda, j=1,\dots, m\} 
\]
with $a_1, \dots, a_{m}\in G$. Let $\p \in L^2(G)$ given by $\widehat \p= \chi_\Omega.$ Consider
$$
\p_j := t_{-a_j} \p\,, \quad j=1, \dots, m.
$$
As in the proof of Theorem \ref{Theorem1}, $E_\Lambda (a_1,\dots,a_m)$ is a frame for $L^2(\Omega)$ with frame bounds $A$ and $B$ if and only if the set 
$$
\Phi_\Lambda : = \{t_\lambda \p_j: \lambda \in \Lambda, j=1, \dots, m\}
$$
is a frame for $PW_\Omega = S_\Lambda(\p_1, \cdots, \p_m)\,$ with the same frame bounds. 

For our particular situation, if $\w \in \widetilde{Q_\Gamma}$,
\begin{equation} \label{Eq3.3}
T_\omega = \left( \begin{array}{ccc}  e_{a_1}(\w+\gamma_1(\w)) & \dots & e_{a_m}(\w+\gamma_1(\w)) \\ \vdots &  &  \vdots \\ e_{a_1}(\w+\gamma_{\ell_\w}(\w)) & \dots & e_{a_m}(\w+\gamma_{\ell_\w}(\w))
\end{array}\right)\,.
\end{equation}
As in Theorem 2.9 of \cite{AAC15} the matrix $T_\w$, for $\w \in \widetilde Q_\Gamma$,   can be factored as
\begin{equation} \label{Eq3.4}
T_\w = E_\w U_\w : = \left( \begin{array}{ccc}  e_{a_1}(\gamma_1(\w)) & \dots & e_{a_m}(\gamma_1(\w))\\ \vdots &  &  \vdots \\ e_{a_1}(\gamma_{\ell_\w}(\w))& \dots & e_{a_m}(\gamma_{\ell_\w}(\w))
\end{array}\right) 
\left( \begin{array}{ccc}  e_{a_1}(\w)& \dots & 0 \\ \vdots &  &  \vdots \\ 0 & \dots & e_{a_m}(w)
\end{array}\right) 
\,.
\end{equation}

Since $U_\w$ is unitary and $T_\w T_\w^* = E_\w E_\w^*$, we have proved the following result:

\begin{proposition} \label{Prop3.2}
	With the notation and definitions as above, the following are equivalent:
	
(i) The set $E_\Lambda(a_1, \dots, a_m )$ is a frame for $L^2(\Omega)$.

(ii) The matrix $E_\w$ given in \eqref{Eq3.4} has rank $\ell_{\w}$ (see \eqref{Eqellw}) for a. e. $\w \in \widetilde{Q_\Gamma}.$

Moreover, in this situation, the optimal frame bounds $A$ and $B$ of $E_\Lambda(a_1, \dots, a_m )$  are given by
$$
A = |Q_\Gamma|\, \mbox{ess\,inf}_{\w \in \widetilde Q_\Gamma} \lambda_{min}(E_\w E_\w^*) \qquad \mbox{and} \qquad  B = |Q_\Gamma|\, \mbox{ess\,sup}_{\w \in \widetilde Q_\Gamma} \lambda_{max}(E_\w E_\w^*)\,.
$$

\end{proposition}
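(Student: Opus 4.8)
The plan is to obtain Proposition \ref{Prop3.2} by specializing Proposition \ref{Pro3.1} to the generators $\p_j := t_{-a_j}\p$, $j = 1,\dots,m$, with $\widehat\p = \chi_\Omega$, and then reading off the consequences of the factorization \eqref{Eq3.4}. First I would record the Fourier identity $\widehat{\p_j}(\w) = \widehat{t_{-a_j}\p}(\w) = e_{a_j}(\w)\,\chi_\Omega(\w)$. Since for $\w\in\widetilde{Q_\Gamma}$ the elements $\gamma_1(\w),\dots,\gamma_{\ell_\w}(\w)$ are chosen so that $\w+\gamma_k(\w)\in\Omega$, this gives $\widehat{\p_j}(\w+\gamma_k(\w)) = e_{a_j}(\w+\gamma_k(\w))$, so the matrix $T_\w$ from \eqref{Eq3.1} attached to these $\p_j$ is exactly the matrix in \eqref{Eq3.3}. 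Next I would justify \eqref{Eq3.4}: because $(\w+\gamma)(a_j) = \w(a_j)\gamma(a_j)$ (evaluation of characters is multiplicative in the $\widehat G$-variable), the $(k,j)$ entry $e_{a_j}(\w+\gamma_k(\w))$ of $T_\w$ equals the $(k,j)$ entry $e_{a_j}(\gamma_k(\w))$ of $E_\w$ times the $j$-th diagonal entry $e_{a_j}(\w)$ of $U_\w$; that is, $T_\w = E_\w U_\w$. As each $e_{a_j}(\w)$ has modulus one, $U_\w$ is a unitary diagonal matrix, whence $\mbox{rank}(T_\w) = \mbox{rank}(E_\w)$ and $T_\w T_\w^* = E_\w U_\w U_\w^* E_\w^* = E_\w E_\w^*$ for a.e. $\w\in\widetilde{Q_\Gamma}$.

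With these preliminaries the proof is assembly. As noted immediately before the statement, $E_\Lambda(a_1,\dots,a_m)$ is a frame for $L^2(\Omega)$ with bounds $A,B$ if and only if $\Phi_\Lambda = \{t_\lambda\p_j : \lambda\in\Lambda,\ j=1,\dots,m\}$ is a frame for $PW_\Omega = S_\Lambda(\p_1,\dots,\p_m)$ with the same bounds; this is transport of structure along the unitary map $f\mapsto\widehat f|_\Omega$ from $PW_\Omega$ onto $L^2(\Omega)$, under which $t_\lambda\p_j\mapsto e_{a_j-\lambda}|_\Omega$, so the image of $\Phi_\Lambda$ is precisely $E_\Lambda(a_1,\dots,a_m)$ after relabelling $\lambda\mapsto-\lambda$. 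By Proposition \ref{Pro3.1} the latter holds if and only if $\mbox{rank}(T_\w) = \ell_\w$ for a.e. $\w\in\widetilde{Q_\Gamma}$, which by the first paragraph is the same as $\mbox{rank}(E_\w) = \ell_\w$ for a.e. $\w\in\widetilde{Q_\Gamma}$; this is (i) $\Leftrightarrow$ (ii). For the optimal bounds, \eqref{Eq3.2} in Proposition \ref{Pro3.1} gives the optimal frame bounds of $\Phi_\Lambda$, hence of $E_\Lambda(a_1,\dots,a_m)$, as $A = |Q_\Gamma|\,\mbox{ess\,inf}_{\w\in\widetilde{Q_\Gamma}}\lambda_{min}(T_\w T_\w^*)$ and $B = |Q_\Gamma|\,\mbox{ess\,sup}_{\w\in\widetilde{Q_\Gamma}}\lambda_{max}(T_\w T_\w^*)$; substituting $T_\w T_\w^* = E_\w E_\w^*$ yields the stated formulas.

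I expect no genuine obstacle here: the proposition is a corollary of Proposition \ref{Pro3.1} and the factorization \eqref{Eq3.4}. The only points needing a line of justification are the two character identities above — $\widehat{\p_j}(\w+\gamma_k(\w)) = e_{a_j}(\w+\gamma_k(\w))$, which uses $\w+\gamma_k(\w)\in\Omega$ so that $\chi_\Omega$ contributes the factor $1$, and $(\w+\gamma)(a_j) = \w(a_j)\gamma(a_j)$ — together with the elementary observation that a diagonal matrix with unit-modulus entries is unitary, so that replacing $T_\w$ by $E_\w$ changes neither the rank nor the Hermitian matrix $T_\w T_\w^*$. Finally, one should keep in mind that the equality $PW_\Omega = S_\Lambda(\p_1,\dots,\p_m)$ used in the bridge is, via the range-function description employed in the proof of Theorem \ref{Theorem1} (namely $J(\w)\subseteq\ell^2(\{\delta_{\gamma_1(\w)},\dots,\delta_{\gamma_{\ell_\w}(\w)}\})$ always), precisely the condition $\mbox{rank}(T_\w)=\ell_\w$ a.e., so the appeal to Proposition \ref{Pro3.1} — whose statement is phrased for $S_\Lambda(\p_1,\dots,\p_m)$ — is legitimate in both directions of the equivalence.
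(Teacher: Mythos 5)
Your proposal is correct and follows essentially the same route as the paper: specialize Proposition \ref{Pro3.1} to $\p_j = t_{-a_j}\p$ with $\widehat\p = \chi_\Omega$, identify $T_\w$ as the exponential matrix \eqref{Eq3.3}, and use the factorization $T_\w = E_\w U_\w$ with $U_\w$ unitary so that neither the rank nor $T_\w T_\w^*$ changes. The paper presents exactly this chain (more tersely) in the discussion preceding the statement, so no further comparison is needed.
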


\

\begin{Remark}
	Proposition \ref{Prop3.2} can be found in \cite{AAC15} when $\Omega$ is an $\ell$-tile and ``frame''   is replaced by ``Riesz basis''. \end{Remark}

\

\begin{example}
In this example we work with the additive group $G=\mathbb R^d$  and the lattice $\Lambda =\mathbb Z^d.$ Recall that $\widehat{G} = \R^d$ and $\Gamma = \Z^d.$
Let $\Omega_0 \subset \Omega_1 \subset [0 , 1)^d$ be two measurable sets in $\R^d$ and let $\ga_0 \in \Z^d \ (\ga_0 \neq 0).$ Take
$$ \Omega = \Omega_1 \cup (\ga_{0} + \Omega_0)\,,$$
so that $(\Omega, \Z^d)$ is a 2-subtiling pair of $\R^d.$

For $a_1, a_2, \dots, a_m \in \R^d$ consider the set of exponentials
$$ E_{\Z^d} (a_1, \dots, a_m)= \{ e^{2\pi i \langle k+a_j , \,\bigcdot \,\rangle} : k \in \Z^d, j=1, \dots, m \}\,.$$ 
By factoring out $e^{2\pi i \langle a_1 , x \rangle}$ we can assume $a_1=0.$

According to Proposition \ref{Prop3.2}, to determine the values of $a_1=0, a_2, \dots, a_m$ for which the set $E_{\Z^d}(0, a_2, \dots, a_m)$ is a frame for $L^2(\Omega)$, we need to compute the ranks of the matrices $E_\w$ given in \eqref{Eq3.4}.

For $\w \in \Omega_1 \setminus \Omega_0,\ \ell_{\w}=1$, $E_w = (1, 1, \dots , 1)$, and $\mbox{rank}\, (E_\w) = 1 = \ell_{\w}.$ For $\w \in \Omega_0,\ \ell_{\w}=2,$ and

\begin{eqnarray} \label{Eq3.5}
E_\w = \left( \begin{array}{cccc}  1 & 1 & \dots & 1 \\ 1 & e^{2\pi i \langle a_2 , \ga_{0}\rangle } & \dots &  e^{2\pi i \langle a_m , \ga_{0}\rangle } 
\end{array}\right) \,.
\end{eqnarray}
Let $H := \bigcup_{k\in \Z}\{x \in \R^d: \langle x, \gamma_0 \rangle = k  \}\,,$ that is a countable union of hyperplanes in $\R^d$ perpendicular to the vector $\ga_{0}.$
The rank of the matrix given in \eqref{Eq3.5} is 2 when at least one of the $a_j$ does not belong to $H$. In this case,  $E_{\Z^d}(0, a_2, \dots, a_m)$ is a frame for $L^2(\Omega)$ as an application of Proposition \ref{Prop3.2}.

We now compute the optimal frame bounds. For $\w \in \Omega_1 \setminus \Omega_0\,, E_\w E_\w^* =(m),$ so that $ \lambda_{\mbox{min}}(E_\w E_\w^*) = \lambda_{\mbox{max}}(E_\w E_\w^*) = m\,.$ 
For $\w \in \Omega_0\,,$ 
$$
E_\w E_\w^*= \left( \begin{array}{cc}  m & 1 + \sum_{j=2}^m e^{-2 \pi i \langle a_j , \ga_{0}\rangle} \\ 1 + \sum_{j=2}^m e^{2 \pi i \langle a_j , \ga_{0}\rangle}  & m 
\end{array}\right) \,.
$$
The eigenvalues of this matrix are
$$
\lambda = m \pm \Big| 1 + \sum_{j=2}^m e^{2 \pi i \langle a_j , \ga_{0}\rangle}\Big|\,.
$$
Therefore, the optimal lower and upper frame bounds of  $E_{\Z^d}(0, a_2, \dots, a_m)$ in $L^2(\Omega)$ are 
$$
A = m - \Big| 1 + \sum_{j=2}^m e^{2 \pi i \langle a_j , \ga_{0}\rangle}\Big|\, \quad \mbox{and}\quad B= m + \Big| 1 + \sum_{j=2}^m e^{2 \pi i \langle a_j , \ga_{0}\rangle}\Big|
$$
when $a_j \notin H$ for some $j\in \{2, \dots, m \}.$ Observe that the frame $E_{\Z^d}(0, a_2, \dots, a_m)$ in $L^2(\Omega)$ is tight (with tight frame bound $m$) if and only if $\displaystyle 1 + \sum_{j=2}^m e^{2 \pi i \langle a_j , \ga_{0}\rangle}=0$. This occurs, for example, if the complex numbers $\{ 1, e^{2 \pi i \langle a_2 , \ga_{0}\rangle}, \dots , e^{2 \pi i \langle a_m , \ga_{0}\rangle}  \}$ are the vertices of a regular $m$-gon inscribed in the unit circle.
\end{example}

\bibliographystyle{plain}

\begin{thebibliography}{20}
	
\bibitem{AAC15} E. Agora, J. Antezana, and C. Cabrelli, {\it Muti-tiling sets, Riesz bases, and sampling near the critical density in LCA groups}. Advances in Math., 285 (2015), 454--477.

\bibitem{BHM2016}
D.~Barbieri, E.~Hern\'andez and A.~Mayeli.
\newblock {\em Lattice sub-tilings and frames  in {LCA} groups.} C. R. Acad. Sci. Paris, Ser. 1, 356 (2), (2017), 193-199.

\bibitem{CP10}
C.~Cabrelli and V.~Paternostro.
\newblock {\em Shift-invariant spaces on {LCA} groups.}
\newblock J. Funct. Anal., 258(6), (2010), 2034--2059.


\bibitem{FG68}
J.~Feldman and F.P. ~Greenleaf.
\newblock {\em Existence of Borel transversals in groups.}
\newblock Pacific J. Math. 25 (1968) 455–-461.
	
\bibitem{Fue74} B. Fuglede, {\it Commuting self-adjoint partial differential operators and a group theoretic problem}. J. Funct. Anal. 16 (1974), 101--121.

\bibitem{GL14} S. Grepstad, N. Lev, {\it Multi-tiling and Riesz basis}. Advances in Math., 252 (15), (2014), 1--6.

\bibitem{HKLW07} D. Han, K. Kornelson, D. Larson, E. Weber, {\it Frames for undergraduates}. AMS, Student Mathematical Library, Vol. 40, (2007).

\bibitem{HR79} E. Hewitt, K. A. Ross, {\it Abstract harmonic analysis. Vol. I: Structure of topological groups, integration theory, group representations.} Springer, 2nd Ed. (1979).

\bibitem{Kol15} M. Kolountzakis, {\it Multiple lattice tiles and Riesz bases of exponentials}.  Proc. Amer. Math. Soc. 143 (2015), 741--747.
	
\bibitem{Pedersen} S. Pedersen, {\it Spectral Theory of Commuting Self-Adjoint Partial Differential Operators}. Journal of Functional Analysis 73 (1987), 122--134 .

\bibitem{Young} R. M. Young, {\it Introduction to nonharmonic Fourier series}. Academic Press, (1980).



\end{thebibliography}

\vskip 1truemm

\end{document}